\definecolor{webgreen}{rgb}{0,.5,0}
\definecolor{webbrown}{rgb}{.6,0,0}
\numberwithin{equation}{section}
\DeclareMathOperator{\Li}{Li}
\begin{document}


\theoremstyle{plain}
\newtheorem{theorem}{Theorem}
\newtheorem{corollary}[theorem]{Corollary}
\newtheorem{lemma}[theorem]{Lemma}
\theoremstyle{remark}
\newtheorem{remark}[theorem]{Remark}

\begin{center}
\vskip 1cm{\LARGE\bf The integrals in Gradshteyn and Ryzhik. Part 1: An Addendum \\
\vskip .11in }

\vskip 0.8cm

{\large
Kunle Adegoke \\
Department of Physics and Engineering Physics, \\ Obafemi Awolowo University, Ile-Ife, Nigeria \\
\href{mailto:adegoke00@gmail.com}{\tt adegoke00@gmail.com}

\vskip 0.15 in

Robert Frontczak\footnote{Statements and conclusions made in this paper by R.~Frontczak are entirely those of the author.
They do not necessarily reflect the views of LBBW.} \\ Landesbank Baden-W\"urttemberg, Stuttgart, Germany \\
\href{mailto:robert.frontczak@lbbw.de}{\tt robert.frontczak@lbbw.de}

\vskip 0.15 in

Taras Goy \\
Faculty of Mathematics and Computer Science \\ Vasyl Stefanyk Precarpathian National University, Ivano-Frankivsk, Ukra\-ine \\
\href{mailto:taras.goy@pnu.edu.ua}{\tt taras.goy@pnu.edu.ua}}
\end{center}

\vskip .15in

\begin{abstract}
We present another generalization of a logarithmic integral studied by V.~H. Moll in 2007.
The family of integrals contains three free parameter and its evaluation involves the harmonic numbers.
\end{abstract}

\medskip

Keywords: Log-integral,  Fibonacci number, Lucas number, harmonic number.

\medskip

AMS Subject Classification: 33B15, 11B39.

\section{Motivation}

The classical ``Table of Integrals, Series and Products'' by Gradshteyn and Ryzhik \cite{GrRy07}
contains a huge range of values of definite integrals. In a series of papers beginning in 2007,
Moll, Amdeberhan, Medina, Boyadzhiev, Vignat and others established, corrected and generalized many of these formulas.
Part 30 \cite{Part30} is probably one of the most recent papers in this series,
although Boros and Moll \cite{BoMo} formulated the desire to prove all the formulas from \cite{GrRy07},
which is a hard and tortuous task. Moll \cite{Moll-Mono1,Moll-Mono2} has written excellent books dealing with special integrals of Gradshteyn and Ryzhik \cite{GrRy07}.

Formula 4.232.3 in \cite{GrRy07} states that
\begin{equation}\label{log_int}
\int_0^\infty \frac{\ln{x}\,dx}{(x+a)(x-1)} = \frac{\pi^2 + \ln^2{a}}{2(a+1)}, \quad a>0.
\end{equation}
This formula is interesting as it allows to derive some related values as well.
For instance, with $a=\alpha^2$ and $a=\alpha^{-2}$ ($\alpha=\frac{1+\sqrt{5}}{2}$) upon combining
we get, respectively, the formulas
\begin{equation*}
\int_0^\infty \frac{\ln{x}\,dx}{(x-1)(x^2+3x+1)} = \frac{1}{5} \int_0^\infty \frac{(2x+3)\ln x\,dx}{(x-1)(x^2+3x+1)}
= \frac{\pi^2 + 4 \ln^2 \alpha}{10},
\end{equation*}
\begin{equation*}
\int_0^\infty  \frac{(x+1)\ln{x}\,dx}{(x-1)(x^2+3x+1)}  = \frac{\pi^2 + 4 \ln^2\alpha}{5}.
\end{equation*}
Also, with $a=\alpha$ and $a=-\beta=\alpha^{-1}$, in turn, from \eqref{log_int} we have the following interesting integrals:
\begin{align*}
\int_{0}^{\infty} \frac{(x+1-\alpha)\ln{x\,dx}}{(x^2+x-1)(x-1)}&=\frac{\pi^2+\ln^2\alpha}{2\alpha^2},\\
\int_{0}^{\infty} \frac{(x-\alpha)\ln{x}\,dx}{(x^2-x-1)(x-1)}&=\frac{\pi^2+\ln^2\alpha}{2\alpha}.
\end{align*}

In the very first paper of the above series \cite{Moll}, Moll generalized \eqref{log_int} by considering the family of logarithmic integrals
\begin{equation*}
f_n (a) = \int_0^\infty \frac{\ln^{n-1}{x}\,dx}{(x+a)(x-1)}, \quad n\geq 2,\,\, a>0.
\end{equation*}
Moll proved that
\begin{align*}
f_n(a) & =  \frac{(-1)^n (n-1)!}{a+1}\Big ( (1-(-1)^{n-1})\zeta(n) -\Li_n\Big (-\frac{1}{a}\Big ) + (-1)^{n-1} \Li_n(-a)\Big ) \nonumber \\
& =  \frac{(-1)^n (n-1)!}{a+1}\Big ( (1-(-1)^{n-1})\zeta(n) \\
& \quad-  \frac{1}{n(a+1)} \sum_{j=0}^{\lfloor n/2 \rfloor} (-1)^{j}\binom {n}{2j}(2^{2j}-2)  \pi^{2j} B_{2j} \ln^{n-2j}{a} \Big ),
\end{align*}
where $\zeta(s)$ is the Riemann zeta function, $\Li_n(z)$ is the polylogarithm and $B_n$ are the Bernoulli numbers.

In this paper we provide an addendum to Moll's paper by considering the different family of integrals
\begin{equation}\label{int1}
F (m,k,a) = \int_0^\infty \frac{x^m\ln{x}\,dx}{(x-1)(x+a)^{k+m+1}},
\end{equation}
where the three parameter satisfy $m,k\in\mathbb{N}_0$ and $a>0$.
\newpage
We require the following lemma in the sequel.
\begin{lemma} If $c$ is an arbitrary constant and $s\in \mathbb{R}$, then
	\begin{equation}\label{eq.al7j6j4}
	\frac{d^k}{da^k}\left(\frac{{a + c}}{{(ax + 1)^s }}\right) = \frac{{( - 1)^k k!x^{k - 1} }}{{(ax + 1)^{k + s} }}\left( {(a + c)x\binom{s + k - 1}{s-1} - (ax + 1)\binom{s + k - 2}{s - 1}} \right).
	\end{equation}
\end{lemma}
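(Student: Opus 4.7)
The plan is to prove the identity by applying the general Leibniz rule to the factorization
\[
\frac{a+c}{(ax+1)^s} = (a+c)\cdot(ax+1)^{-s},
\]
viewed as a product of two functions of $a$. Since $a+c$ is linear in $a$, its derivatives with respect to $a$ are $a+c$, then $1$, and then $0$ from order two on. Hence the Leibniz sum collapses to exactly two terms:
\[
\frac{d^k}{da^k}\!\left(\frac{a+c}{(ax+1)^s}\right) = (a+c)\,\frac{d^k}{da^k}(ax+1)^{-s} + k\,\frac{d^{k-1}}{da^{k-1}}(ax+1)^{-s}.
\]

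Next I would compute the iterated $a$-derivatives of $(ax+1)^{-s}$ directly. Each differentiation brings down a factor of $x$ together with the falling product $-s,-s-1,\ldots$, giving
\[
\frac{d^j}{da^j}(ax+1)^{-s} = (-1)^j\,s(s+1)\cdots(s+j-1)\,x^j\,(ax+1)^{-s-j} = (-1)^j j!\,\binom{s+j-1}{s-1} x^j (ax+1)^{-s-j},
\]
where the generalized binomial $\binom{s+j-1}{s-1}$ is understood via the Pochhammer identity $s(s+1)\cdots(s+j-1)/j!$; this is exactly the rewrite demanded by the target formula.

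Finally, I would substitute this expression for $j=k$ and $j=k-1$ into the two-term Leibniz reduction. Pulling out the common factor $(-1)^k k!\,x^{k-1}(ax+1)^{-s-k}$ (noting that $k(k-1)!=k!$ and that the term of order $k-1$ carries one fewer factor of $x$ and one extra factor of $(ax+1)$ in the denominator exponent, which becomes $(ax+1)$ in the numerator after the common factor is removed, along with the sign flip $(-1)^{k-1}=-(-1)^k$) yields precisely
\[
\frac{(-1)^k k!\,x^{k-1}}{(ax+1)^{k+s}}\left((a+c)x\binom{s+k-1}{s-1}-(ax+1)\binom{s+k-2}{s-1}\right),
\]
which is \eqref{eq.al7j6j4}. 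There is no real obstacle here; the only thing to watch is the bookkeeping of signs and the conversion between the falling-product form of the derivative and the binomial-coefficient form used in the statement, so I would present that rewriting step carefully as the main sub-lemma. Induction on $k$ is an equally viable alternative, but the Leibniz route avoids any inductive manipulation of binomials.
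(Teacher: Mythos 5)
Your proposal is correct and follows essentially the same route as the paper: the paper likewise applies the Leibniz rule, which collapses to two terms because $a+c$ is linear in $a$, and then substitutes $\frac{d^j}{da^j}(ax+1)^{-s} = (-1)^j \frac{(s+j-1)!\,x^j}{(s-1)!\,(ax+1)^{j+s}}$, which is exactly your Pochhammer-to-binomial rewrite. Your sign and factor bookkeeping when extracting the common factor $(-1)^k k!\,x^{k-1}(ax+1)^{-s-k}$ is also accurate, so nothing is missing.
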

\begin{proof} Leibnitz rule gives
	\[
	\frac{{d^k }}{{da^k }}\left(\frac{{a + c}}{{(ax + 1)^s }}\right) = (a + c)\frac{{d^k }}{{da^k }}\left(\frac{1}{{(ax + 1)^s }}\right) + k {\frac{d}{{da}}(a + c)}\frac{{d^{k - 1} }}{{da^{k - 1} }}\left(\frac{1}{{(ax + 1)^s }}\right),
	\]
	from which~\eqref{eq.al7j6j4} follows, since
	\[
	\frac{d^k}{da^k}\left(\frac{1}{{(ax + 1)^s }}\right) = ( - 1)^k \frac{{(s + k - 1)!x^k }}{{(s - 1)!(ax + 1)^{k + s} }}.
	\]
\end{proof}

\section{The evaluations of $F(m,k,a)$ for $m=0,1,2$}

Before deriving the general expression for $F(m,k,a)$ we study in detail some special cases.
First we prove the following formula for $F(0,k,a)$.
\begin{theorem}\label{thm_main1}
	For $k\in\mathbb{N}_0$ and $a>0$, we have
	\begin{equation}\label{Main1}
	F(0,k,a) = \frac{1}{(a+1)^{k+1}}\left(\frac{\pi^2 + \ln^2{a}}{2} + \sum_{j=0}^{k-1} \frac{\big(1+1/a\big)^{j+1}}{j+1} (H_{j} - \ln{a})\right)
	\end{equation}
	with $H_n=\sum_{k=1}^n\frac{1}{k}$, $H_0=0$, being the harmonic numbers.
\end{theorem}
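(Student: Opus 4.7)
The plan is to regard~\eqref{log_int} as the $k=0$ case of~\eqref{Main1} and to obtain the general statement by differentiating $k$ times in $a$ under the integral sign.

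First I would use the elementary relation
\[
\frac{1}{(x+a)^{k+1}} \;=\; \frac{(-1)^{k}}{k!}\,\frac{d^{k}}{da^{k}}\frac{1}{x+a},
\]
and justify swapping $d/da$ with $\int_{0}^{\infty}$ by a dominated-convergence argument on each compact subinterval of $(0,\infty)$ in $a$ (note the endpoint singularity of the integrand at $x=1$ is removable, so no principal value is needed). Combined with~\eqref{log_int}, this reduces the theorem to the purely calculus-level task
\[
F(0,k,a) \;=\; \frac{(-1)^{k}}{k!}\,\frac{d^{k}}{da^{k}}\!\left(\frac{\pi^{2}+\ln^{2}a}{2(a+1)}\right).
\]

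Next I would apply Leibniz's rule to the product of $\tfrac12(\pi^{2}+\ln^{2}a)$ and $1/(a+1)$. The derivatives of $1/(a+1)$ are the standard $(-1)^{k-j}(k-j)!/(a+1)^{k-j+1}$, so the only nontrivial ingredient is a closed form for $d^{j}/da^{j}(\ln^{2}a)$. A short induction on $j$, whose step uses exactly the recursion $H_{j}=H_{j-1}+1/j$, establishes
\[
\frac{d^{j}}{da^{j}}\ln^{2}a \;=\; \frac{2(-1)^{j}(j-1)!}{a^{j}}\bigl(H_{j-1}-\ln a\bigr), \qquad j\geq 1,
\]
and this is where the harmonic numbers make their entrance on the right-hand side of~\eqref{Main1}.

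Substituting both derivative formulas into the Leibniz expansion and using the collapse identity $\binom{k}{j}(j-1)!(k-j)! = k!/j$ will leave
\[
\frac{d^{k}}{da^{k}}\frac{\pi^{2}+\ln^{2}a}{2(a+1)} \;=\; \frac{(-1)^{k}k!}{(a+1)^{k+1}}\!\left(\frac{\pi^{2}+\ln^{2}a}{2} + \sum_{j=1}^{k}\frac{(1+1/a)^{j}}{j}\bigl(H_{j-1}-\ln a\bigr)\right),
\]
after absorbing $a^{-j}(a+1)^{j-k-1}$ into the common denominator $(a+1)^{k+1}$ as $(1+1/a)^{j}$. Dividing by $(-1)^{k}k!$ and reindexing $j\mapsto j+1$ in the sum reproduces~\eqref{Main1} exactly.

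I expect the main obstacle to be the inductive closed form for $d^{j}/da^{j}\ln^{2}a$; everything else is bookkeeping with binomials. An equally feasible alternative uses the partial-fraction identity $\frac{1}{(x-1)(x+a)^{k+1}}=\frac{1}{(a+1)(x-1)(x+a)^{k}}-\frac{1}{(a+1)(x+a)^{k+1}}$ to obtain the recursion $F(0,k,a)=\frac{1}{a+1}F(0,k-1,a)-\frac{1}{a+1}J_{k+1}$ with $J_{k+1}=\int_{0}^{\infty}\ln x\,(x+a)^{-k-1}dx = (\ln a - H_{k-1})/(ka^{k})$ (evaluated by substituting $x=au$ and differentiating the beta integral $\int_{0}^{\infty}u^{p-1}(1+u)^{-k-1}du$ at $p=1$). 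Unrolling $k$ steps gives the same formula, so either route works; I would prefer the Leibniz route as more conceptual.
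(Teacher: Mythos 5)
Your proposal is correct and follows essentially the same route as the paper: differentiate \eqref{log_int} $k$ times in $a$, expand by the Leibniz rule, and invoke the closed form $\frac{d^j}{da^j}\ln^2 a = \frac{2(-1)^j (j-1)!}{a^j}\bigl(H_{j-1}-\ln a\bigr)$, which the paper establishes via a two-term recurrence for the coefficients where you use a direct induction on $j$ --- a cosmetic difference only. Your dominated-convergence remark and the alternative partial-fraction recursion (whose ingredient $J_{k+1}$ is the paper's \eqref{eq.gavc1hl}) are sound additions, but the core argument coincides with the paper's proof.
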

\begin{proof} Starting with \eqref{log_int} we differentiate both sides $k$ times with respect to $a$ to get
	\begin{align*}
	(-1)^k k! \int_0^\infty \frac{\ln{x}\,dx}{(x+a)^{k+1}(x-1)}& = \frac{1}{2} \frac{d^k}{d a^k} \left(\frac{\pi^2 + \ln^2{a}}{a+1}\right)\\
	&= \frac{1}{2} \sum_{j=0}^k \binom {k}{j} ((a+1)^{-1})^{(j)}(\pi^2+\ln^2{a})^{(k-j)},
	\end{align*}
	where we have used the Leibniz rule for derivatives. We have
	\begin{equation}\label{help_der1}
	\frac{d^k}{d a^k}\left(\frac{1}{x+a}\right)  = \frac{(-1)^k \, k!}{(x+a)^{k+1}}, \quad k\geq 0.
	\end{equation}
	Now, assuming that
	\begin{equation*}
	\frac{d^k}{d a^k} ( \pi^2 + \ln^2{a} ) = \frac{X_k}{a^{k}} + \frac{Y_k\,\ln{a}}{a^k}
	\end{equation*}
	we get the recurrences, for $k\geq 1$,
	\begin{equation*}
	X_{k+1} = Y_k - k X_k \quad\mbox{and}\quad Y_{k+1} = - k Y_k,
	\end{equation*}
	with $X_1=0$ and $Y_1=2$. The recurrence for $Y_k$ is solved straightforwardly and the result is $Y_k=2 (-1)^{k-1}\,(k-1)!$.
	This gives
	\begin{align*}
	X_{k+1} & = \sum_{j=0}^{k-1} (-1)^j j! \binom {k}{j} Y_{k-j} \\
	& = 2 (-1)^{k-1} k! \sum_{j=0}^{k-1}\frac{1}{k-j} = 2 (-1)^{k-1} k!\, H_k,
	\end{align*}
	and finally, for $k\geq 1$
	\begin{equation}\label{eq.bbwvmo2}
	\frac{d^k}{d a^k} ( \pi^2 + \ln^2{a}) = \frac{2 (-1)^k (k-1)!}{a^{k}} (H_{k-1}-\ln{a}).
	\end{equation}
	The formula \eqref{Main1} follows upon simplifications.
\end{proof}

For $k=0$ in \eqref{Main1} we get \eqref{log_int}. The next two cases are
\begin{equation*}
F(0,1,a) = \frac{a(\pi^2 + \ln^2{a}) - 2(a+1) \ln{a}}{2 a^2 (a+1)^2},
\end{equation*}
and
\begin{equation*}
F(0,2,a) = \frac{a^2(\pi^2 + \ln^2{a}) - (a+1)(3a+1) \ln{a} + (a+1)^2}{2 a^2 (a+1)^3}.
\end{equation*}
\begin{corollary}\label{Cor2.2}
	For $k\in\mathbb{N}_0$, we have
	\begin{equation}
	\begin{split}\label{CorMain1}
	& \int_0^\infty\frac{\ln{x}\sum\limits_{j=0}^{k+1} \binom {k+1}{j}  L_{2(k+1-j)}x^j}{(x-1)(x^2+3x+1)^{k+1}}\,dx
	= \frac{\pi^2 + 4\ln^2{\alpha}}{2\cdot 5^{k/2}} 
	\begin{cases}
	\frac{L_{k+1}}{\sqrt5}, & \text{\rm{if $k$ is odd;}} \\
	F_{k+1}, & \text{\rm{if $k$ is even}}
	\end{cases}	\\
	& \qquad \qquad +\frac{(-1)^k}{5^{k/2}} \sum_{j=0}^{k-1} \frac{(-1)^{j}5^{j/2}}{j+1} \Big ( H_{j}\big(L_{k+2+j}+\alpha^{k+2+j}((-1)^{k-j}-1)\big)\\
	& \qquad \qquad- 2\ln{\alpha} \big(L_{k+2+j}-\alpha^{k+2+j}\big((-1)^{k-j}+1)\big)\Big )
	\end{split}
	\end{equation}
	and
	\begin{equation}
	\begin{split} \label{CorMain2}
	& \int_0^\infty \frac{\ln x\sum\limits_{j=0}^{k+1} \binom {k+1}{j} F_{2(k+1-j)}x^j}{(x-1)(x^2+3x+1)^{k+1}}\, dx
	= \frac{\pi^2 + 4\ln^2{\alpha} }{2\cdot 5^{(k+1)/2}} 
	\begin{cases}
	{F_{k+1}}, & \text{\rm{if $k$ is odd;}} \\
	\frac{L_{k+1}}{\sqrt5}, & \text{\rm{if $k$ is even}}
	\end{cases}\\
	& \qquad \qquad -\frac{(-1)^k}{5^{(k+1)/2}} \sum_{j=0}^{k-1} \frac{(-1)^{j}5^{j/2}}{j+1} \Big ( H_{j}\big(L_{k+2+j}-\alpha^{k+2+j}((-1)^{k-j}+1)\big) \\
	& \qquad \qquad  - 2\ln{\alpha} \big(L_{k+2+j}+\alpha^{k+2+j}((-1)^{k-j}-1)\big)\Big )
	\end{split}
	\end{equation}
	with $F_n \,(L_n)$ being the Fibonacci (Lucas) numbers and where $\alpha=(1+\sqrt{5})/2$ is the golden ratio.
\end{corollary}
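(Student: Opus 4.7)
The plan is to specialize Theorem \ref{thm_main1} at the two conjugate values $a=\alpha^2$ and $a=\alpha^{-2}$ and then take appropriate linear combinations. The factorization $x^2+3x+1 = (x+\alpha^2)(x+\alpha^{-2})$, which follows from $\alpha^2+\alpha^{-2}=L_2=3$ and $\alpha^2\cdot\alpha^{-2}=1$, together with the binomial theorem and the Binet identities $\alpha^{2r}+\alpha^{-2r}=L_{2r}$ and $\alpha^{2r}-\alpha^{-2r}=\sqrt{5}\,F_{2r}$, gives
\begin{equation*}
(x+\alpha^2)^{k+1} \pm (x+\alpha^{-2})^{k+1} = \sum_{j=0}^{k+1}\binom{k+1}{j}\bigl\{L_{2(k+1-j)}\ \text{or}\ \sqrt{5}\,F_{2(k+1-j)}\bigr\}\,x^j.
\end{equation*}
Dividing by $(x^2+3x+1)^{k+1}=(x+\alpha^2)^{k+1}(x+\alpha^{-2})^{k+1}$ and multiplying by $\ln x/(x-1)$ shows that the left-hand side of \eqref{CorMain1} equals $F(0,k,\alpha^{-2})+F(0,k,\alpha^2)$, while the left-hand side of \eqref{CorMain2} equals $\tfrac{1}{\sqrt{5}}\bigl(F(0,k,\alpha^{-2})-F(0,k,\alpha^2)\bigr)$.

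Next, I would evaluate $F(0,k,\alpha^{\pm 2})$ from Theorem \ref{thm_main1} using the golden-ratio identities $\alpha^2+1=\sqrt{5}\,\alpha$, $\alpha^{-2}+1=\sqrt{5}\,\alpha^{-1}$, $1+1/\alpha^2=\sqrt{5}\,\alpha^{-1}$, $1+\alpha^2=\sqrt{5}\,\alpha$, and $\ln(\alpha^{\pm 2})=\pm 2\ln\alpha$. A direct substitution yields
\begin{equation*}
F(0,k,\alpha^{\pm 2}) = \frac{\alpha^{\mp(k+1)}}{5^{(k+1)/2}}\left(\frac{\pi^2+4\ln^2\alpha}{2} + \sum_{j=0}^{k-1}\frac{5^{(j+1)/2}\alpha^{\mp(j+1)}}{j+1}\bigl(H_j\mp 2\ln\alpha\bigr)\right).
\end{equation*}
Forming the sum (for \eqref{CorMain1}) and the scaled difference (for \eqref{CorMain2}) collects the leading terms into $(\pi^2+4\ln^2\alpha)(\alpha^{k+1}\pm\alpha^{-(k+1)})/(2\cdot 5^{(k+1)/2})$, and, with $n=k+2+j$, produces summands of the form $H_j(\alpha^n\pm\alpha^{-n})\pm 2\ln\alpha(\alpha^n\mp\alpha^{-n})$.

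The remaining step is to recast every $\alpha^n\pm\alpha^{-n}$ in Fibonacci/Lucas form. The uniform Binet rewrite $\alpha^{-n}=(-1)^n(L_n-\alpha^n)$ (equivalent to $L_n=\alpha^n+(-1)^n\alpha^{-n}$) gives
\begin{equation*}
\alpha^n+\alpha^{-n} = (-1)^n\bigl[L_n+\alpha^n((-1)^n-1)\bigr],\qquad \alpha^n-\alpha^{-n} = -(-1)^n\bigl[L_n-\alpha^n((-1)^n+1)\bigr].
\end{equation*}
Applied at $n=k+2+j$ (so that $(-1)^n=(-1)^{k-j}$) these are exactly the brackets appearing in \eqref{CorMain1} and \eqref{CorMain2}; the extracted sign $(-1)^{k+j}$ combines with $5^{-(k+1)/2}$ and the summand factor $5^{(j+1)/2}/(j+1)$ to produce the announced prefactor $\tfrac{(-1)^k}{5^{k/2}}\cdot\tfrac{(-1)^j 5^{j/2}}{j+1}$. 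The leading Fibonacci/Lucas term comes from applying the same identity at $n=k+1$, and splitting on the parity of $k+1$ yields the case distinction stated in the corollary.

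The main obstacle is pure bookkeeping: tracking the interplay of the signs $(-1)^k,(-1)^j,(-1)^n$ with the half-integer powers of $5$ arriving from $(a+1)^{k+1}$, $(1+1/a)^{j+1}$, and (for \eqref{CorMain2}) the division by $\sqrt{5}$, and verifying that they collapse to the prefactors $\tfrac{(-1)^k}{5^{k/2}}$ in \eqref{CorMain1} and $-\tfrac{(-1)^k}{5^{(k+1)/2}}$ in \eqref{CorMain2}. Once the Binet substitution is inserted uniformly, the identification of the two sides is mechanical.
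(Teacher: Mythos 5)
Your proposal is correct and follows essentially the same route as the paper: inserting $a=\alpha^2$ and $a=\beta^2=\alpha^{-2}$ into \eqref{Main1}, adding/subtracting, and simplifying with $\alpha^2+1=\sqrt5\,\alpha$, $\beta^2+1=-\sqrt5\,\beta$ and the Binet-type identities for $\alpha^{k+1}\pm(-1)^{k+1}\beta^{k+1}$. Your write-up merely makes explicit what the paper leaves implicit (the factorization $x^2+3x+1=(x+\alpha^2)(x+\alpha^{-2})$ behind the combined integrands, the intermediate closed form for $F(0,k,\alpha^{\pm2})$, and the uniform rewrite $\alpha^{-n}=(-1)^n(L_n-\alpha^n)$ for the sum terms), and these details check out.
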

\begin{proof} To get \eqref{CorMain1} insert $a=\alpha^2$ and $a=\beta^2=\alpha^{-2}$ in \eqref{Main1}, respectively, and add the expressions.
	When simplifying use the relations $\alpha^2+1=\sqrt{5}\alpha$ and $\beta^2+1=-\sqrt{5}\beta$ as well as
	\begin{equation*}
	\alpha^{k+1}+(-1)^{k+1}\beta^{k+1} = \begin{cases}
	L_{k+1}, & \text{if $k$ is odd;} \\
	\sqrt5F_{k+1}, & \text{if $k$ is even.}
	\end{cases}
	\end{equation*}
	Identity \eqref{CorMain2} is obtained by subtraction using
	\begin{equation*}
	\alpha^{k+1}-(-1)^{k+1}\beta^{k+1} =
	\begin{cases}
	L_{k+1}, & \text{if $k$ is even;} \\
	\sqrt5 F_{k+1}, & \text{if $k$ is odd.}
	\end{cases}
	\end{equation*}
\end{proof} When $k=0$ and $k=1$ then Corollary \ref{Cor2.2} yields the following results as particular cases:
\begin{align}
\int_0^\infty \frac{(2x+3)\ln x\,dx }{(x-1)(x^2+3x+1)}& = \frac{\pi^2}{2} + 2\ln^2{\alpha},\label{My1}\\
\int_0^\infty \frac{\ln{x}\,dx }{(x-1)(x^2+3x+1)}&= \frac{\pi^2}{10} + \frac{2}{5} \ln^2{\alpha},\label{My2}\\
\int_0^\infty \frac{(2x^2+6x+7)\ln x\,dx }{(x-1)(x^2+3x+1)^{2}}&
= \frac{3\pi^2}{10} + \frac{6}{5}\ln^2\alpha + \frac{8}{\sqrt{5}}\ln\alpha\label{My3}
\end{align}
and
\begin{equation}
\int_0^\infty \frac{(2x+3)\ln{x}\,dx}{(x-1)(x^2+3x+1)^2}
= \frac{\pi^2}{10} + \frac{2}{5} \ln^2{\alpha}+ \frac{4}{\sqrt{5}}\ln{\alpha}.\label{My4}
\end{equation}

Since, as is easily shown from \eqref{My1} and \eqref{My2}, 	
$$
\int_{0}^{\infty} \frac{x\ln{x}\,dx}{(x-1)(x^2+3x+1)}=\frac{\pi^2+4\ln^2\alpha}{10},
$$
it follows that  
$$
\int_{0}^{\infty} \frac{(sx+q)\ln{x}\,dx}{(x-1)(x^2+3x+1)} = (s+q)\frac{\pi^2+4\ln^2\alpha}{10},
$$
for arbitrary $s$ and $q$. Similarly, from \eqref{My3} and \eqref{My4} we have
$$
\int_{0}^{\infty} \frac{(sx^2+qx+r)\ln{x}\,dx}{(x^2+3x+1)^2}=\frac{2(s-r)}{\sqrt5}\ln{\alpha},
$$ 
for arbitrary $s$, $q$ and $r$.
\begin{corollary}
	If $k\in \mathbb{N}_0$ and $r$ is an even integer, then
	\begin{equation}\label{eq.cqkdxx9}
	\begin{split}
	\int_0^\infty \frac{\ln x\sum\limits_{j = 0}^{k + 1} \binom{k + 1}j L_{2rj}x^{k + 1 - j}}{(x - 1)(x^2  + L_{2r}x + 1)^{k + 1}}&\,dx 
	=\frac{L_{r(k + 1)}}{2L_r^{k + 1}}
	(\pi ^2  + 4r^2 \ln ^2 \alpha )	\\
	&+ \frac{1}{L_r^k}\sum_{j = 0}^{k - 1} \frac{L^j_r}{j+1}
	\left(L_{r(k+2+ j)} H_{j} + 2\sqrt 5r  F_{r(k + 2+ j)}\ln \alpha\right), 
	\end{split}
	\end{equation}
	\begin{equation}\label{eq.dk2mwuc}
	\begin{split}
	\int_0^\infty  \frac{{\ln x\sum\limits_{j = 0}^{k + 1} {\binom{k + 1}j F_{2rj}x^{k + 1 - j} } }}{{(x - 1)(x^2  + L_{2r} x + 1)^{k + 1} }}&\,dx = \frac{{F_{r(k + 1)} }}{{2L_r^{k + 1} }}(\pi ^2  + 4r^2 \ln ^2 \alpha )\\
	& + \frac{1}{L^k_r}\sum_{j = 0}^{k - 1} \frac{L^j_r}{j+1}\left(F_{r(k+ j + 2)} H_{j} + \frac{2rL_{r(k+2+j)}  }{\sqrt 5}\ln \alpha\right). 
	\end{split}
	\end{equation}
\end{corollary}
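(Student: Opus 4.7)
The plan is to follow the same strategy as Corollary~\ref{Cor2.2}, but with $a=\alpha^{2r}$ and $a=\alpha^{-2r}$ substituted into Theorem~\ref{thm_main1}. The products $\alpha^{2r}\cdot\alpha^{-2r}=1$ and $\alpha^{2r}+\alpha^{-2r}=L_{2r}$ give the factorization $(x+\alpha^{2r})(x+\alpha^{-2r})=x^{2}+L_{2r}x+1$, so combining $F(0,k,\alpha^{2r})$ and $F(0,k,\alpha^{-2r})$ over a common denominator automatically produces $(x^{2}+L_{2r}x+1)^{k+1}$. Expanding $(x+\alpha^{2r})^{k+1}\pm(x+\alpha^{-2r})^{k+1}$ via the binomial theorem recovers the Lucas/Fibonacci-weighted polynomials $\sum_{j}\binom{k+1}{j}L_{2rj}\,x^{k+1-j}$ or $\sqrt{5}\sum_{j}\binom{k+1}{j}F_{2rj}\,x^{k+1-j}$ in the numerator, matching the two left-hand sides.

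The parity assumption on $r$ controls the right-hand sides. For even $r$ one has $\alpha^{-r}=\beta^{r}$, so that $\alpha^{2r}+1=\alpha^{r}L_{r}$ and $\alpha^{-2r}+1=\alpha^{-r}L_{r}$; moreover every exponent of $\alpha$ appearing in the analysis, namely $r$, $r(k+1)$, and $r(k+j+2)$, is even, so the Binet identities $\alpha^{n}+\alpha^{-n}=L_{n}$ and $\alpha^{n}-\alpha^{-n}=\sqrt{5}\,F_{n}$ apply throughout. Substituting $\ln(\alpha^{\pm 2r})=\pm 2r\ln\alpha$ together with these simplifications into~\eqref{Main1} collapses each of $F(0,k,\alpha^{\pm 2r})$ to a closed form whose $\alpha$-dependence is confined to the single powers $\alpha^{\pm r(k+1)}$ and $\alpha^{\pm r(k+j+2)}$.

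Adding the two closed forms turns the pair $\alpha^{\pm r(k+1)}$ into $L_{r(k+1)}$ in the leading term and, inside the sum, converts the $H_{j}$-coefficient into $L_{r(k+j+2)}$ and the $\ln\alpha$-coefficient into $\sqrt{5}\,F_{r(k+j+2)}$, giving~\eqref{eq.cqkdxx9}. Subtracting and dividing by $\sqrt{5}$ interchanges the roles of $L$ and $F$ on both the integrand side and the closed-form side, and produces~\eqref{eq.dk2mwuc}. The main obstacle is purely organizational: one must correctly expand the factor $(1+1/a)^{j+1}/(a+1)^{k+1}$ from~\eqref{Main1} as $\alpha^{\mp r(k+j+2)}L_r^{\,j}/L_r^{\,k}$, and invoke the parity of $r$ at every point where $\alpha^{-n}$ is identified with $\beta^{n}$.
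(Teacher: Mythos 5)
Your proposal is correct and matches the paper's proof, which likewise forms $F(0,k,\alpha^{2r})\pm F(0,k,\beta^{2r})$ from Theorem~\ref{thm_main1} and uses the even-$r$ factorizations $\alpha^{2r}+1=\alpha^{r}L_{r}$, $\beta^{2r}+1=\beta^{r}L_{r}$ (your $a=\alpha^{-2r}$ is exactly $a=\beta^{2r}$). You simply spell out the binomial-expansion and Binet-identity bookkeeping that the paper leaves implicit.
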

\begin{proof}
	Consider $F(0,k,\alpha ^{2r} ) \pm F(0,k,\beta ^{2r} )$, using~\eqref{int1} and~\eqref{Main1}; and the fact that if $r$ is an even integer, then
	$\alpha^{2r} + 1 =\alpha^r L_r$ and $\beta^{2r} + 1 =\beta^r L_r$.
\end{proof}
\begin{theorem}
	If $a>0$ and $k\in \mathbb{N}$, then
	\begin{equation}\label{eq.gavc1hl}
	\int_0^\infty  {\frac{\ln x\,dx}{(x + a)^{k + 1}}}  = \frac{\ln a - H_{k - 1} }{{ka^k }}.
	\end{equation}
\end{theorem}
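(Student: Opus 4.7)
The plan is to derive~\eqref{eq.gavc1hl} by differentiating a base case $k-1$ times with respect to $a$, just as Theorem~\ref{thm_main1} was obtained from~\eqref{log_int}. Let $I_k(a)$ denote the left-hand side. Since $\frac{d^{k-1}}{da^{k-1}}\frac{1}{(x+a)^2} = \frac{(-1)^{k-1}k!}{(x+a)^{k+1}}$, differentiation under the integral sign (routine for $a$ in a compact subinterval of $(0,\infty)$) gives
\[
I_k(a) \;=\; \frac{(-1)^{k-1}}{k!}\,\frac{d^{k-1}}{da^{k-1}}\!\left(\frac{\ln a}{a}\right),
\]
reducing matters to the base case $I_1(a)=\ln a/a$ and to a derivative calculation.

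To establish the base case I would substitute $x=au$ to split $\ln x = \ln a + \ln u$, obtaining
\[
I_1(a) \;=\; \frac{\ln a}{a}\int_0^\infty\!\frac{du}{(u+1)^2} + \frac{1}{a}\int_0^\infty\!\frac{\ln u\,du}{(u+1)^2}.
\]
The first integral equals $1$, and the second vanishes by the involution $u\mapsto 1/u$, which sends the integrand to its negative. To compute the remaining derivative, I would apply the Leibniz rule to $\ln a\cdot a^{-1}$, using $(\ln a)^{(j)}=(-1)^{j-1}(j-1)!/a^j$ for $j\ge 1$ together with $(a^{-1})^{(i)}=(-1)^i i!/a^{i+1}$. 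Extracting the common factor $(-1)^n n!/a^{n+1}$ and simplifying $\binom{n}{j}(j-1)!(n-j)!=n!/j$ collapses the sum to a harmonic number, giving
\[
\frac{d^{n}}{da^{n}}\!\left(\frac{\ln a}{a}\right) \;=\; \frac{(-1)^n n!}{a^{n+1}}\bigl(\ln a - H_n\bigr),
\]
in close analogy with~\eqref{eq.bbwvmo2}. Setting $n=k-1$ and substituting into the earlier display yields $I_k(a)=(\ln a-H_{k-1})/(ka^k)$.

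The only mild obstacle is the base case: a direct integration by parts in $I_1(a)$ produces a divergent boundary term at $x=0$, so the substitution-plus-symmetry argument above is the cleanest elementary route (an alternative would be to differentiate the Beta integral $\int_0^\infty u^{s-1}(1+u)^{-k-1}du$ at $s=1$, but this is less in the spirit of the paper). Everything else is mechanical, with the Leibniz collapse into a harmonic number mirroring the combinatorial step already carried out for~\eqref{eq.bbwvmo2}.
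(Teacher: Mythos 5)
Your proof is correct, but it takes a genuinely different route from the paper. The paper never computes a base case for the one-factor integral at all: it rewrites \eqref{log_int} (after $a\mapsto 1/a$) as $2\int_0^\infty (a+1)\ln x \,dx/((ax+1)(x-1)) = \pi^2+\ln^2 a$ and differentiates $k$ times, using the lemma \eqref{eq.al7j6j4} with $c=1$, $s=1$; the point of that lemma is that the numerator it produces, $(a+1)x-(ax+1)=x-1$, cancels the singular factor $(x-1)$ in the denominator, so the $k$-th derivative of the two-factor integral collapses to $2(-1)^k k!\int_0^\infty x^{k-1}\ln x\,dx/(ax+1)^{k+1}$, while \eqref{eq.bbwvmo2} disposes of the $\pi^2+\ln^2 a$ side (the $\pi^2$ vanishing under differentiation); a final substitution converts this to \eqref{eq.gavc1hl}. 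You instead bypass \eqref{log_int}, the lemma, and \eqref{eq.bbwvmo2} entirely: your scaling $x=au$ plus the involution $u\mapsto 1/u$ gives the clean base case $I_1(a)=\ln a/a$, and $(k-1)$-fold differentiation under the integral with the Leibniz collapse $\binom{n}{j}(j-1)!(n-j)!=n!/j$ yields $\frac{d^n}{da^n}(\ln a/a)=(-1)^n n!\,(\ln a-H_n)/a^{n+1}$, hence the result. Your argument is self-contained and more elementary, with convergence and the interchange of derivative and integral easy to justify since the integrand has no singularity at $x=1$; the paper's argument buys consistency with its overall program (everything flows from \eqref{log_int} by parameter differentiation) and showcases the $(x-1)$-cancellation mechanism of the lemma, which it reuses elsewhere. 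Your remark that naive integration by parts on $I_1$ fails at the boundary is accurate, and your symmetry evaluation of $\int_0^\infty \ln u\,(1+u)^{-2}du=0$ is a legitimate replacement; all intermediate formulas check out, including agreement of the base case with \eqref{eq.gavc1hl} at $k=1$ since $H_0=0$.
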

\begin{proof} Write~\eqref{log_int} as
	\[
	2\int_0^\infty\frac{{(a + 1)\ln x\,dx}}{{(ax + 1)(x - 1)}} = \pi ^2  + \ln ^2 a;
	\]
	differentiate both sides $k$ times with respect to $a$, making use of~\eqref{eq.al7j6j4} and~\eqref{eq.bbwvmo2}. Write $1/a$ for~$a$.
\end{proof}
Note that~\eqref{eq.gavc1hl} is equivalent to Gradshteyn and Ryzhik~\cite[4.253.6]{GrRy07}; in which case the harmonic number is expressed in terms of the digamma function, thereby removing the restriction on $k$.
\begin{corollary}
	If $a,k>0$ and $m>1$, then
	\begin{equation*}\label{eq.ep8qs3l}
	\int_0^\infty  {\frac{{x^{k - 1} \ln x}}{{(x + a)^{k + m} }}\left( {x\binom{m + k -2}{m - 2} - a\binom{m + k - 2}{m - 1}} \right)dx}  = \frac{1}{{(m - 1)ka^{m - 1} }}.
	\end{equation*}
\end{corollary}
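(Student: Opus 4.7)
The plan is to follow the same template as the proof of the preceding Theorem~5, leveraging the Lemma \eqref{eq.al7j6j4} once again but with different parameters. First, I would rewrite \eqref{eq.gavc1hl} in a form better suited to differentiation. Substituting $a\to 1/a$ in \eqref{eq.gavc1hl} and clearing the resulting denominators yields the equivalent identity
$$\int_0^\infty \frac{a\ln x\,dx}{(ax+1)^m} = -\frac{H_{m-2}+\ln a}{m-1}, \qquad m\in\mathbb{N},\ m>1,$$
which puts the left-hand side in exactly the shape $\frac{a+c}{(ax+1)^s}\cdot \ln x$ (with $c=0$, $s=m$) that the Lemma is designed to handle.

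Next, I would differentiate both sides $k$ times with respect to $a$. On the left-hand side, the Lemma with $c=0$, $s=m$ gives
$$\frac{d^k}{da^k}\!\left(\frac{a}{(ax+1)^m}\right) = \frac{(-1)^k k!\,x^{k-1}}{(ax+1)^{k+m}}\!\left(ax\binom{m+k-1}{m-1} - (ax+1)\binom{m+k-2}{m-1}\right),$$
while on the right-hand side only $\ln a$ is non-constant in $a$, so
$$\frac{d^k}{da^k}\!\left(-\frac{H_{m-2}+\ln a}{m-1}\right) = -\frac{(-1)^{k-1}(k-1)!}{(m-1)a^k} = \frac{(-1)^k(k-1)!}{(m-1)a^k}.$$
Equating the two sides and dividing by $(-1)^k k!$ produces an integral formula with the combination $ax\binom{m+k-1}{m-1}-(ax+1)\binom{m+k-2}{m-1}$, which collapses via Pascal's rule $\binom{m+k-1}{m-1}=\binom{m+k-2}{m-1}+\binom{m+k-2}{m-2}$ to $ax\binom{m+k-2}{m-2}-\binom{m+k-2}{m-1}$. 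At this stage one has
$$\int_0^\infty \frac{x^{k-1}\ln x}{(ax+1)^{k+m}}\!\left(ax\binom{m+k-2}{m-2}-\binom{m+k-2}{m-1}\right)dx = \frac{1}{k(m-1)a^k}.$$

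Finally, the substitution $a\to 1/a$ converts $(ax+1)^{k+m}$ into $(x+a)^{k+m}/a^{k+m}$ and rescales the bracket by $1/a$, and after collecting powers of $a$ on both sides the desired formula emerges. The only place where care is needed is bookkeeping of the powers of $a$ across the two substitutions $a\to 1/a$ (before differentiation and after), together with the sign arising from $\frac{d^k}{da^k}\ln a = (-1)^{k-1}(k-1)!/a^k$; the rest is purely algebraic manipulation, with Pascal's identity doing the combinatorial work.
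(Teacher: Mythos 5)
Your proposal is correct and is essentially the paper's own proof: the paper likewise rewrites \eqref{eq.gavc1hl} (writing $1/a$ for $a$ and $m-1$ for $k$) as $\int_0^\infty \frac{a\ln x\,dx}{(ax+1)^m} = \frac{\ln a + H_{m-2}}{1-m}$, differentiates $k$ times with respect to $a$ using Lemma \eqref{eq.al7j6j4} (with $c=0$, $s=m$), and finally writes $a$ for $1/a$. Your explicit collapse of the bracket via Pascal's rule and the careful tracking of powers of $a$ merely spell out simplifications that the paper's terse three-line proof leaves implicit, and both check out.
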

\begin{proof}
	Write $1/a$ for $a$ and $m - 1$ for $k$ in~\eqref{eq.gavc1hl}  to obtain
	\[
	\int_0^\infty \frac{{a\ln x \,dx}}{{(ax + 1)^m }} =  \frac{\ln a + H_{m - 2}}{1-m}.
	\]
	Differentiate the above expression $k$ times with respect to $a$, using~\eqref{eq.al7j6j4} and~\eqref{eq.slgmwoa}. Finally, write $a$ for $1/a$.
\end{proof}

The integral $F(1,k,a)$ is evaluated in the next theorem.
\begin{theorem}
	For $k\in\mathbb{N}_0$ and $a>0$, we have
	\begin{equation}
	\begin{split}\label{Main2}
	F (1,k,a) & = \frac{\pi^2 + \ln^2 a}{2(a+1)^{k+2}} + \frac{\ln a}{(k+1)(a+1)^{k+1}} \\
	& \quad + \frac{1}{(k+1)(a+1)^{k+1}} \sum_{j=0}^{k-1}  \frac{\big(1+1/a\big)^{j+1}}{j+1}\left( \frac{k-j}{a+1} \big(H_{j} - \ln a\big) - 1 \right).
	\end{split} 
	\end{equation}
\end{theorem}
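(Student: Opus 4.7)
The strategy is to reduce $F(1,k,a)$ to integrals already treated in Theorem~\ref{thm_main1} by splitting $x=(x+a)-a$ in the numerator. This yields the linear relation
\[
F(1,k,a) \;=\; F(0,k,a) - a\,F(0,k+1,a),
\]
so that the entire theorem reduces to a careful manipulation of the right-hand side via~\eqref{Main1}.

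Substituting~\eqref{Main1} for both pieces, the coefficient of $(\pi^2+\ln^2 a)/2$ collapses at once by means of $\tfrac{1}{(a+1)^{k+1}} - \tfrac{a}{(a+1)^{k+2}} = \tfrac{1}{(a+1)^{k+2}}$, producing the first term of the claim. For the harmonic portion, I would split the sum in $F(0,k+1,a)$ into the range $j=0,\ldots,k-1$ plus the isolated $j=k$ term. The common range combines with the sum in $F(0,k,a)$ by the same coefficient identity, while the $j=k$ term simplifies via $a(1+1/a)^{k+1}/(a+1)^{k+2} = 1/(a^{k}(a+1))$ to leave the single residual contribution $-(H_k-\ln a)/((k+1)a^{k}(a+1))$.

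What remains is purely algebraic: reshape the result into the stated form, in which a factor $1/(k+1)$ must appear in front of the outer sum and each summand must assume the shape $\tfrac{k-j}{a+1}(H_j-\ln a)-1$. Setting $v = 1+1/a$ (so that $av = a+1$) and clearing denominators by $(k+1)(a+1)^{k+2}$, the desired equality is equivalent, after use of the cancellation $(k+1)-(k-j)=j+1$, to
\[
\sum_{j=0}^{k-1} v^{j+1}(H_j - \ln a) \;+\; (a+1)\sum_{j=0}^{k-1}\frac{v^{j+1}}{j+1} \;=\; (a+1)\ln a + (a+1)v^{k}(H_k-\ln a).
\]
The $\ln a$ terms balance by the geometric identity $\sum_{j=0}^{k-1}v^{j+1}=(a+1)(v^{k}-1)$, and the remaining harmonic identity $\sum_{j=0}^{k-1} v^{j+1} H_j + (a+1)\sum_{j=0}^{k-1}\frac{v^{j+1}}{j+1} = (a+1) v^{k} H_k$ follows by induction on $k$: the step collapses $(a+1)v^{k}H_k + v^{k+1}H_k + (a+1)v^{k+1}/(k+1)$ to $(a+1)v^{k+1}H_{k+1}$ via $(a+1)+v = v(a+1)$ and $H_{k+1}=H_k+1/(k+1)$. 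The main obstacle is this final bookkeeping: it is not conceptually deep, but it requires exploiting both $v=1+1/a$ and $av=a+1$ at the right moments to make the algebra collapse cleanly into~\eqref{Main2}.
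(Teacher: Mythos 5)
Your proof is correct, but it takes a genuinely different route from the paper. The paper first establishes the base case $k=0$, namely \eqref{int_sc}, by exhibiting an explicit primitive (the function $g(x,a)$ in \eqref{eq.slgmwoa}, involving dilogarithms) and taking limits, and then differentiates \eqref{int_sc} $k$ times with respect to $a$ exactly as in Theorem \ref{thm_main1}, using the Leibniz rule together with \eqref{help_der1}, \eqref{eq.bbwvmo2} and $\frac{d^k}{da^k}\ln a = (-1)^{k-1}(k-1)!\,a^{-k}$; this parameter differentiation produces the answer directly in the stated shape. You instead write $x=(x+a)-a$ in the numerator to get the reduction $F(1,k,a)=F(0,k,a)-a\,F(0,k+1,a)$ and then deduce everything from Theorem \ref{thm_main1} alone. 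I checked your algebra: the coefficient collapse $\frac{1}{(a+1)^{k+1}}-\frac{a}{(a+1)^{k+2}}=\frac{1}{(a+1)^{k+2}}$, the isolated $j=k$ contribution $-\frac{H_k-\ln a}{(k+1)a^k(a+1)}$, the reduction (after clearing $(k+1)(a+1)^{k+2}$ and using $(k+1)-(k-j)=j+1$ and $(a+1)^{k+1}/a^k=(a+1)v^k$) to the displayed identity, the geometric identity $\sum_{j=0}^{k-1}v^{j+1}=(a+1)(v^k-1)$, and the inductive step resting on $(a+1)+v=(a+1)v$ all hold; the relation is also consistent at $k=0$, where it reproduces \eqref{int_sc}. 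Your approach buys elementariness: no new integral evaluation, no dilogarithm primitive, and no repeated differentiation with respect to a parameter — Theorem \ref{thm_main1} is the only analytic input. It also generalizes at once, since $x^m=\sum_{i=0}^m\binom{m}{i}(x+a)^{m-i}(-a)^i$ gives $F(m,k,a)=\sum_{i=0}^m\binom{m}{i}(-a)^iF(0,k+i,a)$, offering an alternative path to the general theorems of Section 3. The price is the final bookkeeping you acknowledge: the paper's differentiation yields the $\frac{k-j}{a+1}(H_j-\ln a)-1$ structure automatically, whereas you must verify agreement with the stated closed form by a (correct) induction. As a side remark, your relation exposes a typo in the paper's displayed special case $F(0,1,a)$, whose denominator should read $2a(a+1)^2$ rather than $2a^2(a+1)^2$, as \eqref{Main1} with $k=1$ confirms.
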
 \begin{proof}
	We start with the observation that
	\begin{equation}\label{int_sc}
	\int_0^\infty \frac{x\ln x\,dx}{(x+a)^{2}(x-1)} = \frac{\pi^2 + \ln^2 a}{2(a+1)^2} + \frac{\ln a}{a+1}.
	\end{equation}
	This is true since we have
	\begin{equation*}
	\int \frac{x\ln x\,dx}{(x+a)^{2}(x-1)} = g(x,a)
	\end{equation*}
	with (the constant $C$ is not displayed)
	\begin{equation}
	\label{eq.slgmwoa}
	\begin{split}
	g(x,a) &= - \frac{1}{(a+1)^2}\left (\Li_2\Big (-\frac{x}{a}\Big ) + \Li_2(1-x)\right.\\
	&\left.\quad+ \ln x\ln\Big (1+\frac{x}{a}\Big ) + \frac{a(a+1)\ln x}{x+a}+(a+1)\ln\Big (1+\frac{a}{x}\Big ) \right).
	\end{split}
	\end{equation}
	Now, taking the limits $\lim_{x\rightarrow\infty} g(x,a)$ and $\lim_{x\rightarrow 0} g(x,a)$ leads us to \eqref{int_sc}.
	The remainder of the proof is the same as in Theorem \ref{thm_main1} using \eqref{help_der1} and
	\begin{equation*}
	\frac{d^k}{d a^k} \ln a = \frac{(-1)^{k-1} \, (k-1)!}{a^{k}}, \quad k\geq 1.
	\end{equation*}
\end{proof}

For $k=0$ in \eqref{Main2} we get \eqref{int_sc}. The next two cases are
\begin{equation*}
F(1,1,a) = \frac{a(\pi^2 + \ln^2 a) + (a^2-1) \ln a - (a+1)^2}{2 a (a+1)^3}
\end{equation*}
and
\begin{equation*}
F(1,2,a) = \frac{3a^2(\pi^2 + \ln^2 a) + (a+1)(2a^2-5a-1) \ln a - 3a(a+1)^2}{6 a^2 (a+1)^4}.
\end{equation*}

To derive a formula for $F(2,k,a)$ we need the next lemma.
\begin{lemma}
	For $k\in\mathbb{N}_0$, the following formula holds
	\begin{equation*}\label{eq.s59wgad}
	\frac{d^k}{d a^k}\left( \frac{a+3}{(a+1)^2}\right) = (-1)^k \, k!\,\frac{a+3+2k}{(a+1)^{k+2}}\,.
	\end{equation*}
\end{lemma}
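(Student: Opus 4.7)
The identity is a clean power-rule computation, so my plan is to avoid induction and simply expose it as a direct consequence of an already-proved result.

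My first instinct is to reduce to standard power derivatives via partial fractions. Write
\[
\frac{a+3}{(a+1)^2} = \frac{(a+1)+2}{(a+1)^2} = \frac{1}{a+1} + \frac{2}{(a+1)^2}.
\]
Then using the elementary facts $\frac{d^k}{da^k}(a+1)^{-1} = (-1)^k k!(a+1)^{-k-1}$ and $\frac{d^k}{da^k}(a+1)^{-2} = (-1)^k (k+1)!(a+1)^{-k-2}$, we combine over the common denominator $(a+1)^{k+2}$, giving $(-1)^k k!\bigl[(a+1) + 2(k+1)\bigr] = (-1)^k k!(a+2k+3)$, which is exactly the claim. This is essentially a two-line proof once the decomposition is in place.

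A slightly more unified alternative, which I would use instead since the paper has just established Lemma \eqref{eq.al7j6j4}, is to specialize that lemma with $x = 1$, $s = 2$, and $c = 3$. The lemma then directly yields
\[
\frac{d^k}{da^k}\left(\frac{a+3}{(a+1)^2}\right) = \frac{(-1)^k k!}{(a+1)^{k+2}}\left((a+3)\binom{k+1}{1} - (a+1)\binom{k}{1}\right),
\]
and expanding $(a+3)(k+1) - (a+1)k = a + 2k + 3$ collapses this to the desired expression. This route has the aesthetic advantage of showing the new lemma as a one-line corollary of the previously-proved general derivative formula, which presumably is the author's intent given its placement immediately after \eqref{eq.al7j6j4}.

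There is no real obstacle here; the only thing to be a little careful about is bookkeeping of the binomial coefficients $\binom{s+k-1}{s-1}$ and $\binom{s+k-2}{s-1}$ when $s = 2$, which collapse to $k+1$ and $k$ respectively. I would present the Lemma-based route as the main proof since it requires no new machinery.
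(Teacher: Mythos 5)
Your main route---specializing the paper's Lemma~1 with $x=1$, $s=2$, $c=3$ and simplifying $(a+3)(k+1)-(a+1)k=a+2k+3$---is exactly the paper's own one-line proof, and your arithmetic checks out. The partial-fraction alternative you sketch is also correct, but since you chose the lemma-based route as primary, your proposal matches the paper's approach.
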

\begin{proof} Use $c=3$, $s=2$ and $x=1$ in~\eqref{eq.al7j6j4}.
\end{proof}
The integral $F(2,k,a)$ admits the following evaluation.
\begin{theorem}
	For $k\in\mathbb{N}_0$ and $a>0$, we have
	\begin{align*}
	F(2,k,a) &= \frac{\pi^2 + \ln^2 a}{2(a+1)^{k+3}} +
	\frac{1}{(k+1)(k+2)(a+1)^{k+1}} \left( \frac{a+3+2k}{a+1} \ln a + 1 \right.\\
	& \quad \left.+ \frac{1}{a}\sum_{j=0}^{k-1} \frac{\big(1+1/a\big)^j}{j+1} \left( \frac{(k-j)(k+1-j)}{a+1}
	\big(H_{j} - \ln a\big) - a-1-2(k-j)\right)\right).	
	\end{align*}
\end{theorem}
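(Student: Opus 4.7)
The plan is to replicate the strategy used to prove \eqref{Main2}: first establish a closed form for the base case $F(2,0,a)$, then differentiate $k$ times with respect to $a$ and read off $F(2,k,a)$ from the identity
$$\frac{d^k}{da^k}F(2,0,a) = \frac{(-1)^k(k+2)!}{2}F(2,k,a),$$
which follows by differentiating under the integral sign using $\frac{d^k}{da^k}(x+a)^{-3} = (-1)^k(k+2)!/(2(x+a)^{k+3})$.

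For the base case I would use the decomposition $x^2 = x(x+a) - ax$ to write $F(2,0,a) = F(1,0,a) - aF(1,1,a)$, then plug in \eqref{int_sc} together with the case $k=1$ of \eqref{Main2} (displayed immediately after that theorem). A short algebraic simplification collapses this to
$$F(2,0,a) = \frac{\pi^2 + \ln^2 a}{2(a+1)^3} + \frac{(a+3)\ln a}{2(a+1)^2} + \frac{1}{2(a+1)},$$
which is the $k=0$ instance of the target formula. The $k$-fold derivative is then handled term by term via Leibniz: the first summand is treated exactly as in Theorem~\ref{thm_main1}, using \eqref{eq.bbwvmo2} and the appropriate specialization of \eqref{help_der1}; the second is handled by combining the $c=3$, $s=2$, $x=1$ specialization of \eqref{eq.al7j6j4} (precisely the lemma stated just above the theorem) with the elementary $\frac{d^j}{da^j}\ln a = (-1)^{j-1}(j-1)!/a^j$ for $j \geq 1$; the third contributes only $(-1)^k k!/(2(a+1)^{k+1})$. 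After multiplying through by $2(-1)^k/(k+2)!$ and simplifying the binomial and factorial coefficients, the top-order contributions ($j = k$ in each Leibniz sum, plus the solo derivative of the third summand) reproduce the three announced pre-sum pieces $\frac{\pi^2+\ln^2 a}{2(a+1)^{k+3}}$, $\frac{(a+3+2k)\ln a}{(k+1)(k+2)(a+1)^{k+2}}$, and $\frac{1}{(k+1)(k+2)(a+1)^{k+1}}$.

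The main obstacle is repackaging the two residual sums (over $j=0,\dots,k-1$) into the single compact sum displayed in the statement. Reversing the summation index via $j \mapsto k-1-j$ converts the recurring factor $1/(a^{j+1}(a+1)^{k-j+1})$ into $(1+1/a)^j/(a(a+1)^{k+1})$, which exposes both the kernel $(1+1/a)^j/(j+1)$ and the common pre-factor $1/((k+1)(k+2)(a+1)^{k+1})$. Under the same reindexing, $H_{k-j-1}$ becomes $H_j$ and $a+3+2j$ becomes $a+1+2(k-j)$; merging the two reindexed sums over a common denominator then yields the bracket $\frac{(k-j)(k+1-j)}{a+1}(H_j - \ln a) - (a+1) - 2(k-j)$ exactly as stated. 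This reindexing plus bracket consolidation is purely mechanical but is the bookkeeping-heavy step most vulnerable to sign or factorial errors, so it should be executed with care.
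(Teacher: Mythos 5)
Your proposal is correct, and its engine is exactly the paper's: the paper's entire proof of this theorem is the sentence ``The proof is similar to the previous two proofs,'' i.e., differentiate the $k=0$ case $k$ times with respect to $a$ --- which is precisely why the lemma $\frac{d^k}{da^k}\bigl(\frac{a+3}{(a+1)^2}\bigr) = (-1)^k k!\,\frac{a+3+2k}{(a+1)^{k+2}}$ (the $c=3$, $s=2$, $x=1$ case of \eqref{eq.al7j6j4}) is placed immediately before the theorem. Your normalization $\frac{d^k}{da^k}F(2,0,a)=\frac{(-1)^k(k+2)!}{2}\,F(2,k,a)$ is right, and your term-by-term Leibniz treatment using \eqref{eq.bbwvmo2}, \eqref{help_der1} and $\frac{d^j}{da^j}\ln a=(-1)^{j-1}(j-1)!/a^j$ is what the paper intends; I checked that the top-order contributions give the three pre-sum pieces and that the residual sums collapse to the stated bracket $\frac{(k-j)(k+1-j)}{a+1}(H_j-\ln a)-a-1-2(k-j)$ with the common kernel $(1+1/a)^j/(j+1)$. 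Where you genuinely differ is the base case: by analogy with its $m=1$ proof, the paper would extract $F(2,0,a)$ from an explicit antiderivative in the spirit of \eqref{eq.slgmwoa} plus boundary limits, whereas you bootstrap it algebraically via $x^2=x(x+a)-ax$, giving $F(2,0,a)=F(1,0,a)-aF(1,1,a)$ and hence the closed form directly from \eqref{int_sc} and the displayed $k=1$ instance of \eqref{Main2} --- a cleaner, self-contained route that avoids producing a new primitive, at the cost of relying on the (already proven) $m=1$ theorem rather than only on elementary calculus. One cosmetic slip in your narration: the factor $1/\bigl(a^{j+1}(a+1)^{k-j+1}\bigr)$ is \emph{already} equal to $(1+1/a)^j/\bigl(a(a+1)^{k+1}\bigr)$ identically --- no reversal is needed to ``convert'' it; the reversal $j\mapsto k-1-j$ is only needed to turn $H_{k-j-1}$ into $H_j$ and $a+3+2j$ into $a+1+2(k-j)$, and indeed if you attach the Leibniz index to the logarithmic factor instead, a plain shift $j\mapsto j+1$ suffices and no reversal occurs at all. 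This does not affect the validity of the argument.
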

\begin{proof} The proof is similar to the previous two proofs.
\end{proof}
When $k=0$ then we get
\begin{equation*}
F(2,0,a) = \frac{\pi^2 + \ln^2 a}{2(a+1)^{3}} + \frac{a+3}{2(a+1)^2} \ln a + \frac{1}{2(a+1)}.
\end{equation*}

\section{The general case}

Here we state a general formula for $F(m,k,a)$. The structure of such a formula is indicated in the above analysis.
Our main argument is not to try to derive an explicit expression for the indefinite integral
\begin{equation*}
\int \frac{x^m\ln x\,dx}{(x-1)(x+a)^{m+1}}
\end{equation*}
but instead using the results from the first part of the paper.
\begin{theorem}
	For $m,k\in\mathbb{N}_0$ and $a>0$, we have
	\begin{equation}\label{Main4}
	\begin{split}
	F (m,k,a) &= \int_0^\infty \frac{{x^m\ln  x\,dx}}{(x-1)(x + a)^{k + m + 1}}\\
	& =  \frac{{\pi^2}}{{2(a + 1)^{k + m + 1}}} + \frac{{(- 1)^m}}{{2m! \binom {k + m} {m} a^{k + m + 1} }}
	\left. {\frac{{d^m }}{{db^m }}\left( {\frac{{\ln^2 b}}{{(b + 1)^{k + 1} }}} \right)} \right|_{b = 1/a} \\
	&\quad + \sum_{j = 0}^{k - 1} {\frac{ \binom {j + m} {m}}{\binom{k + m}{m}} \frac{{H_{k - j - 1} }}{{k - j}}
		\frac{{a^{j - k} }}{{(a + 1)^{j + m + 1} }}}\\
	&\quad+ \frac{{(- 1)^m }}{{m!\binom {k + m}{m} a^{k + m + 1} }}\sum_{j = 0}^{k - 1} \frac{1}{{k - j}}\left. {\frac{{d^m }}{{db^m }} \left( {\frac{{\ln b}}{{(b + 1)^{j + 1} }}} \right)} \right|_{b = 1/a}.
	\end{split}
	\end{equation}
\end{theorem}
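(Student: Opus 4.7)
The plan is to reduce \eqref{Main4} to the already-proved formula \eqref{Main1} by exploiting the $x \mapsto 1/x$ symmetry of the integrand. Substituting $y = 1/x$ in \eqref{int1}, and using $x - 1 = -(y-1)/y$, $x + a = (1 + ay)/y$ together with $(1 + ay)^{k+m+1} = a^{k+m+1}(y + 1/a)^{k+m+1}$, one obtains the reciprocal identity $F(m,k,a) = a^{-(k+m+1)}\,F(k,m,1/a)$. On the other hand, differentiating under the integral sign $m$ times with respect to $b$ gives
\begin{equation*}
F(k,m,b) = \frac{(-1)^m}{m!\,\binom{k+m}{m}}\,\frac{d^m}{db^m}F(k,0,b),
\end{equation*}
and a second application of the reciprocal identity, in the form $F(k,0,b) = b^{-(k+1)}F(0,k,1/b)$, then yields the master relation
\begin{equation*}
F(m,k,a) = \frac{(-1)^m}{m!\,\binom{k+m}{m}\,a^{k+m+1}}\,\left.\frac{d^m}{db^m}\!\left(\frac{F(0,k,1/b)}{b^{k+1}}\right)\right|_{b=1/a}.
\end{equation*}

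Next I substitute the closed form \eqref{Main1} for $F(0,k,1/b)$. Since $(1 + 1/b)^{k+1} = (b+1)^{k+1}/b^{k+1}$ and $\ln(1/b) = -\ln b$, the factor $b^{-(k+1)}$ cancels completely and the quotient takes the transparent form
\begin{equation*}
\frac{F(0,k,1/b)}{b^{k+1}} = \frac{\pi^2}{2(b+1)^{k+1}} + \frac{\ln^2 b}{2(b+1)^{k+1}} + \sum_{s=0}^{k-1}\frac{H_s + \ln b}{(s+1)(b+1)^{k-s}}.
\end{equation*}
Differentiating term by term and evaluating at $b = 1/a$: the $\pi^2$-piece collapses, via $(b+1)^{-(k+m+1)}|_{b=1/a} = a^{k+m+1}/(a+1)^{k+m+1}$ together with $(k+m)!/(k!\,m!) = \binom{k+m}{m}$, to $\pi^2/[2(a+1)^{k+m+1}]$; the $\ln^2 b$ and $\ln b$ pieces are left unevaluated as derivatives, and (after the substitution $j = k-1-s$ in the $\ln b$-sum) reproduce the second and fourth summands of \eqref{Main4} verbatim. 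For the $H_s$-piece I apply the elementary derivative $\frac{d^m}{db^m}(b+1)^{-(k-s)} = (-1)^m\,[(k-s+m-1)!/(k-s-1)!]\,(b+1)^{-(k-s+m)}$ and the same reindexing.

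The step I expect to be the main obstacle is this last bookkeeping. Concretely, one has to verify the identity
\begin{equation*}
\frac{(k-s+m-1)!}{(s+1)\,(k-s-1)!\,m!\,\binom{k+m}{m}} = \frac{1}{k-j}\,\frac{\binom{j+m}{m}}{\binom{k+m}{m}}, \quad j = k - 1 - s,
\end{equation*}
and track how the prefactor $1/a^{k+m+1}$ combines with the factor $a^{k-s+m}/(a+1)^{k-s+m}$ coming from the evaluation at $b = 1/a$ to give the clean form $a^{j-k}/(a+1)^{j+m+1}$. Once the algebra is carried out, every summand of \eqref{Main4} is matched and the theorem follows. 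All other ingredients --- the $x \mapsto 1/x$ substitution, Leibniz, differentiation under the integral sign, and the power-rule derivative of $(b+1)^{-r}$ --- are routine.
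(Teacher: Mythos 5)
Your proposal is correct and follows essentially the same route as the paper: your master relation $F(m,k,a)=\frac{(-1)^m}{m!\binom{k+m}{m}a^{k+m+1}}\frac{d^m}{db^m}\bigl(b^{-(k+1)}F(0,k,1/b)\bigr)\big|_{b=1/a}$ is exactly the paper's proof, which rewrites Theorem \ref{thm_main1} as \eqref{main4help} (note $b^{-(k+1)}F(0,k,1/b)=\int_0^\infty\frac{\ln x\,dx}{(x-1)(bx+1)^{k+1}}$, a purely algebraic inversion that your $x\mapsto 1/x$ reciprocal identity merely repackages), differentiates $m$ times with respect to the parameter, and sets $b=1/a$. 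The bookkeeping you flag does check out: with $j=k-1-s$ one has $\frac{(k-s+m-1)!}{(s+1)(k-s-1)!\,m!\binom{k+m}{m}}=\frac{\binom{j+m}{m}}{(k-j)\binom{k+m}{m}}$ and $a^{-(k+m+1)}\cdot\frac{a^{k-s+m}}{(a+1)^{k-s+m}}=\frac{a^{j-k}}{(a+1)^{j+m+1}}$, so all four summands of \eqref{Main4} are matched.
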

\begin{proof}
	Using $F(0,k,1/a)$ from Theorem \ref{thm_main1} we find
	\begin{equation}\label{main4help}
	\int_0^\infty \frac{\ln x \, dx}{(x-1)(ax + 1)^{k + 1}} = \frac{\pi^2 + \ln^2 a}{2(a + 1)^{k + 1}}
	+ \sum_{j = 0}^{k - 1} \frac{H_{k - j - 1} + \ln a}{(k - j)(a + 1)^{j + 1}}.
	\end{equation}
	Differentiating \eqref{main4help} $m$ times with respect to $a$ and replacing $a$ with $1/a$ gives \eqref{Main4}.
\end{proof}

In particular, $F(m,0,a)$ equals
\begin{equation}\label{main4_k0}
\begin{split}
\int_0^\infty \frac{x^m\ln x\,dx}{(x-1)(x + a)^{m + 1}} & = \frac{\pi^2}{2(a + 1)^{m + 1}} + \frac{{(- 1)^m}}{{2m!a^{m + 1}}}
\left. {\frac{{d^m }}{{db^m }}\left( {\frac{{\ln^2 b}}{{(b + 1)}}} \right)} \right|_{b = 1/a} \nonumber \\
& = \frac{1}{2(a+1)^{m+1}}\left(\pi^2+\ln^2 a + 2\sum_{j = 0}^{m - 1} \frac{(a+1)^{j+1}}{j+1}\big(H_{j} + \ln a\big)\right),
\end{split}
\end{equation}
since
\begin{equation*}
\left.{\frac{{d^m }}{{db^m }} \left( {\frac{{\ln^2 b}}{{b + 1}}} \right)} \right|_{b = 1/a}
= (- 1)^m m! \frac{a^{m + 1} \ln^2 a}{(a + 1)^{m + 1} } +
(- 1)^m2 m! a^{m + 1} \sum_{j = 0}^{m - 1} \frac{H_{m - j - 1} + \ln a}{(m - j)(a + 1)^{j + 1} }.
\end{equation*}
\begin{theorem}
	If $m\in \mathbb{N}_0$ and $r$ is an even integer, then
	\begin{equation}\label{eq.xjvr347}
	\begin{split}
	\int_0^\infty & \frac{{\ln x\sum\limits_{j = 0}^{m + 1} {\binom{m + 1}j L_{2rj}\,x^{2m + 1 - j} }}}{{(x - 1)(x^2  + L_{2r} x + 1)^{m + 1} }}\,dx\\
	& = \frac{{L_{r(m + 1)} }}{{2L_r^{m + 1} }}(\pi ^2  + 4r^2 \ln ^2 \alpha ) + \frac{1}{L_r^m}\sum_{j = 0}^{m - 1}\frac{L_{r}^j}{j+1} \left(L_{r(m-j)} H_{j} - 2\sqrt 5 r \ln \alpha F_{r(m-j)} \right),
	\end{split}
	\end{equation}
	\begin{equation}\label{eq.i9gr6oo}
	\begin{split}
	\int_0^\infty & \frac{\ln x\sum\limits_{j = 0}^{m + 1} \binom{m + 1}j F_{2rj}\,x^{2m + 1 - j}}{{(x - 1)(x^2  + L_{2r} x + 1)^{m + 1} }}\,dx\\
	& = \frac{{F_{r(m + 1)} }}{2L_r^{m + 1}}(\pi ^2  + 4r^2 \ln ^2 \alpha ) + \frac{1}{L_r^m} 
	\sum_{j = 0}^{m - 1} \frac{L^j_r}{j+1} \left( F_{r(m-j)} H_{j}- \frac{2\sqrt 5}{5} r\ln \alpha L_{r(m-j)}\right) .
	\end{split}
	\end{equation}
\end{theorem}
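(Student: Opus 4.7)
The plan is to obtain both identities by forming the combinations $F(m,0,\alpha^{2r})\pm F(m,0,\beta^{2r})$, using the same strategy the authors employed earlier to derive \eqref{eq.cqkdxx9} and \eqref{eq.dk2mwuc} from $F(0,k,\cdot)$. The difference is that we now invoke the closed form \eqref{main4_k0} for $F(m,0,a)$ and exploit that $r$ is even, so that $\alpha^r\beta^r=(\alpha\beta)^r=1$; consequently $\alpha^{2r}+1=\alpha^r L_r$, $\beta^{2r}+1=\beta^r L_r$, and $\beta^{2r}>0$, which in turn gives $\ln(\beta^{2r})=-2r\ln\alpha$.

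For the left-hand sides, since $\alpha^{2r}\beta^{2r}=1$ and $\alpha^{2r}+\beta^{2r}=L_{2r}$, we have the factorization $x^2+L_{2r}x+1=(x+\alpha^{2r})(x+\beta^{2r})$. The binomial identity
$$(x+\alpha^{2r})^{m+1}\pm(x+\beta^{2r})^{m+1}=\sum_{j=0}^{m+1}\binom{m+1}{j}\bigl(\alpha^{2rj}\pm\beta^{2rj}\bigr)x^{m+1-j},$$
together with the Binet relations $\alpha^n+\beta^n=L_n$ and $\alpha^n-\beta^n=\sqrt5\,F_n$, turns $x^m\ln x/(x-1)$ times $[(x+\alpha^{2r})^{-(m+1)}\pm(x+\beta^{2r})^{-(m+1)}]$ into exactly the integrands on the left of \eqref{eq.xjvr347} and \eqref{eq.i9gr6oo} (the Fibonacci sum losing its $j=0$ summand because $F_0=0$).

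For the right-hand sides, write $A=\alpha^{2r}+1=\alpha^r L_r$ and $B=\beta^{2r}+1=\beta^r L_r$, so that $AB=L_r^2$ and
$$A^{-n}+B^{-n}=\frac{L_{rn}}{L_r^n},\qquad B^{-n}-A^{-n}=\frac{\sqrt5\,F_{rn}}{L_r^n}\qquad (n\ge 0).$$
Formula \eqref{main4_k0} gives $F(m,0,\alpha^{2r})$ as $\tfrac12 A^{-(m+1)}(\pi^2+4r^2\ln^2\alpha)$ plus a sum whose $j$-th summand carries $A^{-(m-j)}(H_j+2r\ln\alpha)/(j+1)$, and $F(m,0,\beta^{2r})$ analogously with $A\mapsto B$ and $2r\ln\alpha\mapsto -2r\ln\alpha$. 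Adding the two closed forms and simplifying via the two displayed Lucas/Fibonacci identities yields \eqref{eq.xjvr347}; subtracting and dividing through by $\sqrt5$ yields \eqref{eq.i9gr6oo}.

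The main obstacle is modest: careful sign bookkeeping. The crucial points are that $\ln(\beta^{2r})=-2r\ln\alpha$ rather than $+2r\ln\alpha$ (forced by $\beta<0$ and $r$ even), and that when the two closed forms are combined one must track which of the Lucas factor $L_{r(m-j)}/L_r^{m-j}$ and the Fibonacci factor $\sqrt5\,F_{r(m-j)}/L_r^{m-j}$ multiplies the $H_j$-piece and which multiplies the $r\ln\alpha$-piece, so that the pairings in \eqref{eq.xjvr347} and \eqref{eq.i9gr6oo} come out correctly.
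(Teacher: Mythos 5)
Your proposal is correct and coincides with the paper's own argument: the published proof is precisely the one-line instruction ``Evaluate $F(m,0,\alpha^{2r})\pm F(m,0,\beta^{2r})$,'' applied to the closed form \eqref{main4_k0}. Your write-up merely supplies the details the authors leave implicit --- the factorization $x^2+L_{2r}x+1=(x+\alpha^{2r})(x+\beta^{2r})$, the even-$r$ identities $\alpha^{2r}+1=\alpha^r L_r$, $\beta^{2r}+1=\beta^r L_r$ and $\ln(\beta^{2r})=-2r\ln\alpha$, and the Binet combinations --- all with the signs handled correctly.
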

\begin{proof} 	Evaluate $F(m,0,\alpha^{2r})\pm F(m,0,\beta^{2r})$.
\end{proof}
\begin{corollary}
	If $m,k\in \mathbb{N}_0$ and $r$ is an even integer, then
	\begin{equation}\label{eq.br2tvq8}
	\begin{split}
	\int_0^\infty & \frac{{x^{k+1}(x^k-1)\ln x\sum\limits_{j = 0}^{k + 1} \binom{k+1}{j} \frac{L_{2rj}}{ x^j} }}{{(x - 1)(x^2  + L_{2r} x + 1)^{k + 1} }}\,dx \\
	&\qquad\qquad\qquad \qquad   =-\frac{5}{L_r^k} \sum_{j = 0}^{k - 1} \frac{L_r^j F_{r(k+1)}}{j+1}\Bigl(F_{r(j+1)}H_{j} +\frac{2\sqrt5 r} {5} \ln \alpha L_{r(j +1)} \Bigr),
	\end{split}
	\end{equation}
	\begin{equation}\label{eq.hqyu6af}
	\begin{split}
	\int_0^\infty & \frac{x^{k+1}(x^k+1)\ln x\sum\limits_{j = 0}^{k + 1} \binom{k+1}j \frac{L_{2rj}}{x^j}}{(x - 1)(x^2  + L_{2r} x + 1)^{k + 1}}\,dx = \frac{L_{r(k + 1)}}{L_r^{k + 1}}(\pi ^2  + 4r^2 \ln ^2 \alpha )\\
	&\qquad\qquad\qquad \qquad\qquad  + \frac{L_{r(k + 1)}}{L_r^k} \sum_{j = 0}^{k - 1} \frac{L^j_{r}}{j+1}\left(L_{r(j+1)}H_{j} + 2\sqrt5 r \ln\alpha F_{r(j +1)}\right), 
	\end{split}
	\end{equation}
	\begin{equation}\label{eq.a5vog8v}
	\begin{split}
	\int_0^\infty & \frac{x^{k+1}(x^k-1)\ln x\sum\limits_{j = 0}^{k + 1} \binom{k+1}j \frac{F_{2rj}}{x^j}}{(x-1)(x^2  + L_{2r} x + 1)^{k+1}}\,dx\\
	&\qquad\qquad\qquad\qquad  = - \frac{L_{r(k + 1)}}{L_r^k} \sum_{j = 0}^{k - 1} \frac{L_r^j}{j+1}\Big( F_{r(j +1)} H_{ j}+\frac{2\sqrt 5 r}{5} \ln \alpha L_{r(j +1)}\Big),
	\end{split}
	\end{equation}
	\begin{equation}\label{eq.hcoyj7m}
	\begin{split}
	\int_0^\infty & \frac{x^{k+1}(x^k+1)\ln x\sum\limits_{j = 0}^{k + 1} \binom{k+1}{j} \frac{F_{2rj}}{x^j}}{(x - 1)(x^2  + L_{2r} x + 1)^{k + 1} }\,dx = \frac{F_{r(k + 1)} }{L_r^{k + 1}}(\pi ^2  + 4r^2 \ln ^2 \alpha ) \\
	&\qquad\qquad \qquad \qquad  + \frac{F_{r(k + 1)}}{L^k_r} \sum_{j = 0}^{k - 1} \frac{L_{r}^j}{j+1}
	\left(L_{r(1+j)} H_{j} + 2\sqrt 5 r\ln \alpha F_{r(1+j)}\right).
	\end{split}
	\end{equation}
\end{corollary}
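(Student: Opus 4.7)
The plan is to obtain each of \eqref{eq.br2tvq8}--\eqref{eq.hcoyj7m} directly as a sum or difference of two integrals already evaluated. For the Lucas-weighted pair \eqref{eq.br2tvq8} and \eqref{eq.hqyu6af}, the parents are \eqref{eq.xjvr347} with $m$ relabelled as $k$, together with \eqref{eq.cqkdxx9}; for the Fibonacci-weighted pair \eqref{eq.a5vog8v} and \eqref{eq.hcoyj7m}, the parents are \eqref{eq.i9gr6oo} (again with $m\mapsto k$) and \eqref{eq.dk2mwuc}. The crucial algebraic observation is the numerator factorization
\[
\sum_{j=0}^{k+1}\binom{k+1}{j}L_{2rj}\bigl(x^{2k+1-j}\pm x^{k+1-j}\bigr)
= x^{k+1}(x^k\pm 1)\sum_{j=0}^{k+1}\binom{k+1}{j}\frac{L_{2rj}}{x^j},
\]
which matches precisely the numerator of \eqref{eq.br2tvq8} (with the minus sign) and \eqref{eq.hqyu6af} (with the plus sign); the identical identity with $L_{2rj}$ replaced by $F_{2rj}$ delivers \eqref{eq.a5vog8v} and \eqref{eq.hcoyj7m}. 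Since the denominators of all four parents already coincide with those in the corollary, the integrals combine without further manipulation.

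On the right-hand side, adding the two parent identities doubles the $(\pi^{2}+4r^{2}\ln^{2}\alpha)$ contribution --- producing the leading terms in \eqref{eq.hqyu6af} and \eqref{eq.hcoyj7m} --- while subtracting cancels it, which explains its absence in \eqref{eq.br2tvq8} and \eqref{eq.a5vog8v}. The residual $H_j$- and $\ln\alpha$-sums are simplified by collapsing the pairs $L_{r(k-j)}\pm L_{r(k+2+j)}$ and $F_{r(k-j)}\pm F_{r(k+2+j)}$ via the Fibonacci--Lucas contraction identities
\[
L_{m-n}+L_{m+n}=L_m L_n, \qquad L_{m+n}-L_{m-n}=5F_m F_n,
\]
\[
F_{m-n}+F_{m+n}=L_n F_m, \qquad F_{m+n}-F_{m-n}=L_m F_n,
\]
all of which hold whenever $n$ is even. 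Taking $m=r(k+1)$ and $n=r(j+1)$ the evenness hypothesis is met because $r$ is assumed even, and the four contractions reproduce exactly the $F_{r(k+1)}F_{r(j+1)}$, $L_{r(k+1)}L_{r(j+1)}$, $L_{r(k+1)}F_{r(j+1)}$ and $F_{r(k+1)}L_{r(j+1)}$ patterns visible in the four right-hand sides.

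The whole argument is essentially bookkeeping rather than invention, and the main obstacle is sign tracking: the $H_j$-coefficient and the $\ln\alpha$-coefficient in each parent identity carry opposite relative signs, so addition reinforces one pair while cancelling the other, and subtraction reverses the roles. Once each corollary identity is matched with the correct parent pair and the correct sign, the four Fibonacci--Lucas contractions above finish the computation, and no further integration or limiting argument is needed.
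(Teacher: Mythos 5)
Your proposal is correct and matches the paper's proof essentially verbatim: the paper likewise sets $m=k$ in \eqref{eq.xjvr347} (resp.\ \eqref{eq.i9gr6oo}), adds/subtracts \eqref{eq.cqkdxx9} (resp.\ \eqref{eq.dk2mwuc}), and collapses the resulting pairs $L_{r(k-j)}\pm L_{r(k+2+j)}$ and $F_{r(k-j)}\pm F_{r(k+2+j)}$ with the same Fibonacci--Lucas sum/difference identities, whose parity hypothesis is met because $r$ is even. Your numerator factorization and sign bookkeeping are exactly the (implicit) computations the paper leaves to the reader.
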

\begin{proof}
	Set $m=k$ in~\eqref{eq.xjvr347}; subtract/add~\eqref{eq.cqkdxx9} to obtain~\eqref{eq.br2tvq8}/\eqref{eq.hqyu6af}. Similarly,~\eqref{eq.a5vog8v} and~\eqref{eq.hcoyj7m} follow from~\eqref{eq.dk2mwuc} and~\eqref{eq.i9gr6oo}. Note the use of the following identities that are valid for all integers $u$ and $v$ having the same parity:
	\begin{align*}
	F_u  + F_v  &= \begin{cases}
	L_{(u - v)/2} F_{(u + v)/2}, & \text{if $(u - v)/2$ is even;} \\
	F_{(u - v)/2} L_{(u + v)/2}, & \text{if $(u - v)/2$ is odd,}
	\end{cases}\\
	F_u  - F_v  &= \begin{cases}
	L_{(u - v)/2} F_{(u + v)/2}, & \text{if $(u - v)/2$ is odd;} \\
	F_{(u - v)/2} L_{(u + v)/2}, & \text{if $(u - v)/2$ is  even,}
	\end{cases}\\
	L_u  + L_v  & = \begin{cases}
	L_{(u - v)/2} L_{(u + v)/2}, & \text{if $(u - v)/2$ is even;} \\
	5F_{(u - v)/2} F_{(u + v)/2}, & \text{if $(u - v)/2$ is odd,}
	\end{cases}\\
	L_u  - L_v  &=  \begin{cases}
	L_{(u - v)/2} L_{(u + v)/2}, & \text{if $(u - v)/2$ is odd;} \\
	5F_{(u - v)/2} F_{(u + v)/2}, & \text{if $(u - v)/2$ is even.}
	\end{cases}
	\end{align*}
\end{proof}

Differentiating~\eqref{main4_k0} $k$ times with respect to $a$ gives the following alternative to \eqref{Main4}.
\begin{theorem}
	For $m,k\in\mathbb{N}_0$ and $a>0$, we have
	\begin{equation*}
	\begin{split}
	F(m,k,a) &= \int_0^\infty \frac{{x^m\ln x \,dx}}{{(x-1)(x + a)^{k + m + 1}}} \\
	&= \frac{{\pi^2}}{{2(a + 1)^{k + m + 1} }} + \frac{{(- 1)^k }}{2k!\binom{k + m}{m} }\frac{{d^k }}{{da^k }}
	\left( {\frac{{\ln^2 a}}{{(a + 1)^{m + 1} }}} \right) \\
	&\quad\; + \sum_{j = 0}^{m - 1} {\frac{\binom{k + j}j}{\binom{k + m}m}\frac{{H_{m - j - 1} }}{{m - j}}\frac{1}{{(a + 1)^{j + k + 1} }}}
	+ \frac{(- 1)^k}{k!\binom {k + m}{m}}\sum_{j = 0}^{m - 1} \frac{1}{m - j}\frac{{d^k }}{{da^k }}\left( \frac{{\ln  a}}{{(a + 1)^{j + 1} }} \right).
	\end{split}
	\end{equation*}
\end{theorem}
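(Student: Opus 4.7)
The plan is to obtain this alternative expression by differentiating the known closed form~\eqref{main4_k0} for $F(m,0,a)$ exactly $k$ times with respect to~$a$, as indicated in the line preceding the theorem. Under the integral sign in~\eqref{int1} with $k$ replaced by $0$, the parameter~$a$ enters only through $(x+a)^{-(m+1)}$, and $\frac{d^k}{da^k}(x+a)^{-(m+1)} = (-1)^k \frac{(m+k)!}{m!}(x+a)^{-(m+k+1)}$. Standard dominated-convergence arguments at fixed $a>0$ justify commuting $d/da$ with the integral, yielding
\[
\frac{d^k}{da^k}F(m,0,a) = (-1)^k\, k!\binom{m+k}{m}\,F(m,k,a),
\]
so that $F(m,k,a) = \frac{(-1)^k}{k!\binom{m+k}{m}}\,\frac{d^k}{da^k}F(m,0,a)$.

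Next I would substitute the explicit closed form from~\eqref{main4_k0}. After the reindexing $j\mapsto m-1-j$ inside its finite sum this reads
\[
F(m,0,a) = \frac{\pi^2+\ln^2 a}{2(a+1)^{m+1}} + \sum_{j=0}^{m-1}\frac{H_{m-j-1}+\ln a}{(m-j)(a+1)^{j+1}},
\]
which I would split into a $\pi^2$ piece, a $\ln^2 a$ piece, a harmonic-number rational sum, and a $\ln a$ sum. For the purely rational $\pi^2$ and harmonic-number pieces I would apply the power rule $\frac{d^k}{da^k}(a+1)^{-s} = (-1)^k\frac{(s+k-1)!}{(s-1)!(a+1)^{s+k}}$, and then use the identity $\frac{(j+k)!}{j!\,k!}\cdot\frac{m!}{(m+k)!} = \binom{k+j}{j}\big/\binom{k+m}{m}$ to turn the resulting factorial quotient into the binomial ratio appearing in the stated formula. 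For the $\ln^2 a$ and $\ln a$ pieces I would leave the derivatives unevaluated, producing precisely the two $\frac{d^k}{da^k}$ operators that remain on the right-hand side of the theorem.

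The main obstacle is not conceptual but bookkeeping: tracking the overall sign $(-1)^k$, the factor $k!\binom{m+k}{m}$, and the reindexing so that $H_{m-j-1}$ is paired with $(a+1)^{-(j+k+1)}$ and with exactly the binomial ratio $\binom{k+j}{j}/\binom{k+m}{m}$. Once these simplifications are verified, matching each of the four resulting terms against the corresponding terms on the right-hand side of the claimed identity is routine.
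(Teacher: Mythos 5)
Your proposal is correct and is essentially the paper's own argument: the paper proves this theorem precisely by differentiating~\eqref{main4_k0} $k$ times with respect to $a$, using $\frac{d^k}{da^k}(x+a)^{-(m+1)}=(-1)^k k!\binom{m+k}{m}(x+a)^{-(m+k+1)}$ under the integral sign. Your bookkeeping (the reindexing $j\mapsto m-1-j$, the factor $\frac{(-1)^k}{k!\binom{k+m}{m}}$, and the conversion of $\frac{(j+k)!\,m!}{j!\,(m+k)!}$ into $\binom{k+j}{j}/\binom{k+m}{m}$) checks out term by term, and your dominated-convergence remark only adds rigor the paper leaves implicit.
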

\begin{theorem}
	For $k\in\mathbb{N}_0$ and $a>0$, we have
	\begin{equation}\label{eq.uqky3nr}
	\begin{split}
	\int_0^\infty&\frac{(x^k  - 1)\ln x\,dx}{(x - 1)(x + a)^{k + 1} } \\
	&\qquad= \frac{1}{(a+1)^{k+1}}\sum_{j = 0}^{k - 1} \frac{(1+1/a)^{j+1}}{j+1}\left((a^{j+1}  - 1)H_{ j}  + (a^{j+1}  + 1)\ln a\right),
	\end{split}
	\end{equation}
	\begin{align*}
	\int_0^\infty & \frac{{(x^k  + 1)\ln x\,dx}}{{(x - 1)(x + a)^{k + 1} }}\\
	&\qquad=\frac{{\pi ^2  + \ln ^2 a}}{{(a + 1)^{k + 1} }} + \frac{1}{(a+1)^{k+1}}\sum_{j = 0}^{k - 1} \frac{(1+1/a)^{j+1}}{j+1}\left((a^{j+1}  + 1)H_{ j}  + (a^{j+1}  - 1)\ln a\right) .
	\end{align*}
\end{theorem}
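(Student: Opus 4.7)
The plan is to reduce both identities to formulas already obtained in the paper, namely $F(0,k,a)$ from Theorem~\ref{thm_main1} and the auxiliary evaluation
\[
\int_0^\infty \frac{\ln x\,dx}{(x-1)(ax+1)^{k+1}} = \frac{\pi^2+\ln^2 a}{2(a+1)^{k+1}} + \sum_{j=0}^{k-1}\frac{H_{k-j-1}+\ln a}{(k-j)(a+1)^{j+1}},
\]
which is~\eqref{main4help}. The first identity handles the harder piece; the second follows by a one-line algebraic trick.

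\textbf{First identity.} I would split
\[
\int_0^\infty \frac{(x^k-1)\ln x\,dx}{(x-1)(x+a)^{k+1}} = \int_0^\infty \frac{x^k\ln x\,dx}{(x-1)(x+a)^{k+1}} - F(0,k,a),
\]
and then apply the involution $x\mapsto 1/x$ in the first integral on the right. A routine computation shows
\[
\int_0^\infty \frac{x^k\ln x\,dx}{(x-1)(x+a)^{k+1}} = \int_0^\infty \frac{\ln u\,du}{(u-1)(au+1)^{k+1}},
\]
so this piece equals the right-hand side of~\eqref{main4help}. The $(\pi^2+\ln^2 a)/\bigl(2(a+1)^{k+1}\bigr)$ term then cancels exactly against the matching term coming from $F(0,k,a)$, leaving
\[
\sum_{j=0}^{k-1}\frac{H_{k-j-1}+\ln a}{(k-j)(a+1)^{j+1}} - \frac{1}{(a+1)^{k+1}}\sum_{j=0}^{k-1}\frac{(1+1/a)^{j+1}}{j+1}(H_j-\ln a).
\]

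\textbf{Combining the two sums.} In the first sum I reindex by $j\mapsto k-1-j$, which turns $H_{k-j-1}$ into $H_j$ and $(k-j)(a+1)^{j+1}$ into $(j+1)(a+1)^{k-j}$. Then, using the identity
\[
\frac{(1+1/a)^{j+1}}{(a+1)^{k+1}} = \frac{1}{a^{j+1}(a+1)^{k-j}},
\]
both sums run over a common denominator $(j+1)(a+1)^{k-j}$, and their combination yields exactly
\[
\sum_{j=0}^{k-1}\frac{(1+1/a)^{j+1}}{(j+1)(a+1)^{k+1}}\bigl((a^{j+1}-1)H_j + (a^{j+1}+1)\ln a\bigr),
\]
which is the claimed formula. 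The bookkeeping in this step is the one place where a small slip is easy, so I would carry out the algebra deliberately.

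\textbf{Second identity.} Writing $x^k+1=(x^k-1)+2$, the second integral equals the first plus $2F(0,k,a)$. Using Theorem~\ref{thm_main1}, the extra $2F(0,k,a)$ contributes $(\pi^2+\ln^2 a)/(a+1)^{k+1}$ together with an additional $\sum (1+1/a)^{j+1}(2H_j-2\ln a)/((j+1)(a+1)^{k+1})$; combining this with the first identity simply swaps the roles of $H_j$ and $\ln a$ inside the brackets, turning $(a^{j+1}-1)H_j+(a^{j+1}+1)\ln a$ into $(a^{j+1}+1)H_j+(a^{j+1}-1)\ln a$, which is the stated result.
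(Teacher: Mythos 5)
Your proposal is correct and takes essentially the same route as the paper, whose entire proof is to evaluate $F(k,0,a)\pm F(0,k,a)$ and combine Theorem~\ref{thm_main1} with the known value of the $x^k$ piece. The only cosmetic difference is that you evaluate $\int_0^\infty x^k\ln x\,dx/\bigl((x-1)(x+a)^{k+1}\bigr)$ via the substitution $x\mapsto 1/x$ and \eqref{main4help} instead of quoting \eqref{main4_k0}; since \eqref{main4help} is itself $F(0,k,1/a)$ rewritten, the two evaluations are equivalent, and your reindexing and cancellation algebra checks out.
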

\begin{proof}
	Evaluate $F(k,0,a)\pm F(0,k,a)$.
\end{proof}

Note that~\eqref{eq.uqky3nr} can also be written as
\begin{equation*}
\int_0^\infty  \frac{\ln x \sum\limits_{j=0}^{k - 1}{x^j}}{{(x + a)^{k + 1} }}dx  = \frac{1}{(a+1)^{k+1}}\sum_{j = 0}^{k - 1} \frac{(1+1/a)^{j+1}}{j+1}\left((a^{j+1}  - 1)H_{j}  + (a^{j+1}  + 1)\ln a\right).
\end{equation*}


\end{document}